\documentclass[11pt]{article}
\usepackage{graphicx}
\usepackage{amsfonts,amsmath,amsthm,amstext,amssymb,url}
\usepackage{color}
\usepackage[dvipsnames]{xcolor}
\usepackage{authblk} 
\usepackage{hyperref}
\usepackage{color}
\usepackage{tikz}
\usepackage{circuitikz}
\usepackage{pgfplots}

\def\I{\mathbb{I}}

\def\R{\mathbb{R}}
\def\C{\mathbb{C}}
\def\eqdef{\buildrel{\rm def}\over =}

\newtheorem{theorem}{\sffamily Theorem}

\newtheorem{definition}[theorem]{\sffamily Definition}
\newtheorem{remark}[theorem]{\sffamily Remark}

\newtheorem{notation}[theorem]{\sffamily Notation}

\marginparwidth 0pt \oddsidemargin -.5cm \evensidemargin -.5cm
\marginparsep 0pt \topmargin -1cm \textwidth 17cm \textheight 22.5cm\sloppy

\title{Closed formulae for multiple roots of univariate polynomials through subresultants}

\begin{document}

\author[1]{Jorge Caravantes}
\author[2]{Gema M. Diaz--Toca}
\author[3]{Laureano Gonzalez--Vega}
\affil[1]{Universidad de Alcala, Madrid (Spain)}
\affil[2]{Universidad de Murcia, Murcia (Spain)}
\affil[3]{CUNEF Universidad, Madrid (Spain)}


\maketitle

\begin{abstract}
The computation of the topology of a real algebraic plane curve is greatly simplified if there are no more than one critical point in each vertical line: the general position condition. When this condition is not satisfied, then a finite number of changes of coordinates will move the initial curve to one in general position. We will show many cases where the topology of the considered curve around a critical point is very easy to compute even if the curve is not in general position. This will be achieved by introducing a new family of formulae describing, in many cases and through subresultants, the multiple roots of a univariate polynomial as rational functions of the considered polynomial involving at most one square root.

This new approach will be used to show that the topology of cubics, quartics and quintics can be computed easily even if the curve is not in general position and to characterise those higher degree curves where this approach can be used. We will apply also this technique to determine the intersection curve of two quadrics and to study how to characterise the type of the curve arising when intersecting two ellipsoids.
\end{abstract}

\section*{Introduction}
The problem of computing the topology of a real algebraic plane curve defined implicitly has received special
attention from both  Computer  Aided Geometric Design and  Symbolic Computation, independently.  For  the Computer  Aided Geometric Design community, this problem is a basic subproblem appearing often in practice when dealing with intersection problems. For the Symbolic Computation community, on the other hand, this problem  has been the motivation for many achievements  in the study of subresultants, symbolic real root counting, infinitesimal computations, etc. By a comparison between the seminal papers and  the more renewed works,  one can see how the theoretical and practical complexities of the algorithms dealing with this problem have been dramatically improved (see for example \cite{DRR2015} and \cite{KS2015}).

The computation of the topology of a real algebraic plane curve is greatly simplified if there are no more than one critical point in each vertical line: the general position condition. When this condition is not satisfied, then a finite number of changes of coordinates will move the initial curve to one in general position. We will show here many cases where the topology of the considered curve around a critical point is very easy to compute even if the curve is not in general position. This will be achieved by introducing a new family of formulae describing, in many cases and through subresultants, the multiple roots of a univariate polynomial as rational functions of the considered polynomial involving at most one square root.

This paper is divided into  five sections. The first two sections introduce the general position condition when computing the topology of a real algebraic plane curve and subresultants together with some tools to use when solving the real root counting problem. Third section introduces formulae describing the real multiple roots of an univariate polynomial in terms of their coefficients (typically rational functions involving in the worst case a square root). Forth section show how to deal with the computation of the branches around a critical point when we have an easy to deal with algebraic description: this is the alternative we propose instead of using the ``general condition" (in those cases where the strategy presented here works). Fifth section introduces an application.
\section{Computing the topology of $P(x,y)=0$: why general position?}\label{sec:topology}
The characterizacion of the topology of a curve ${\cal C}_P$ presented by the equation $P(x,y)=0$ follows a sweeping strategy usually based on the location of the critical points of $P$ with respect to $y$ (ie those singular points or points with a vertical tangent) and on the study of the half-branches of ${\cal C}_f$ around these points since, for any other point of ${\cal C}_P$, there will be only one half-branch to the left and one hal-fbranch to the right.

\begin{definition}\label{sec:points}Let $P(x,y)\in\R[x,y]$,
$${\cal C}_P=\{(\alpha,\beta)\in\R^2:P(\alpha,\beta)=0\}$$
the real algebraic plane curve defined by $P$ and $\alpha\in\R$. 
\begin{itemize}
\item A point
$(\alpha,\beta)\in\C^2$ is called a critical point of ${\cal C}_P$ if $$P(\alpha,\beta)={\frac{\partial P}{\partial y}}(\alpha,\beta)=0.$$
\item A critical point is said to be singular if
$${\frac{\partial P}{\partial x}}(\alpha,\beta)=0.$$
\item A point $(\alpha,\beta)\in\R^2$ is a regular point of ${\cal C}_P$ if $P(\alpha,\beta)=0$ and it is not a critical point.
\end{itemize}
Non singular critical points are called ramification points.
\end{definition}

The usual strategy to compute the topology of a real algebraic plane
curve $\cal C$ defined implicitly by a polynomial $P(x,y)\in
\mathbb{R}[x,y]$ proceeds in the following way:
\begin{enumerate}
\item Compute the discriminant of $P$ with respect to
$y$, $D(x)$ and its real roots, $\alpha_1<\alpha_2<\ldots <
\alpha_{r}$: the $x$--coordinates of the critical points of ${\cal C}_P$.
\item For every $\alpha_i$, compute the
real roots of $P(\alpha_i,y)$, $\beta_{i,1}<\ldots
<\beta_{i,s_i}$ and determining which $\beta_{i,j}$ are regular points and which $\beta_{i,j}$ are critical points. Each $x=\alpha_i$ is called a critical line.
\item For every $\alpha_i$ and every $\beta_{i,j}$, compute the number of half-branches to the right and
to the left of each point $(\alpha_i,\beta_{i,j})$.
\end{enumerate}

First two steps provide the vertices of a graph that will represent the topology of the
considered curve.  Figure \ref{topology} shows how all this information will allow us to determine the topology of the cutcurve once all these points have been determined (for details see \cite{BPR, LGV_IN}). This graph is very helpful when tracing the curve numerically, since we will know exactly how to proceed when coming closer to a complicated point.

\begin{figure}[hbt]
\centering
\fbox{\includegraphics[scale=0.20]{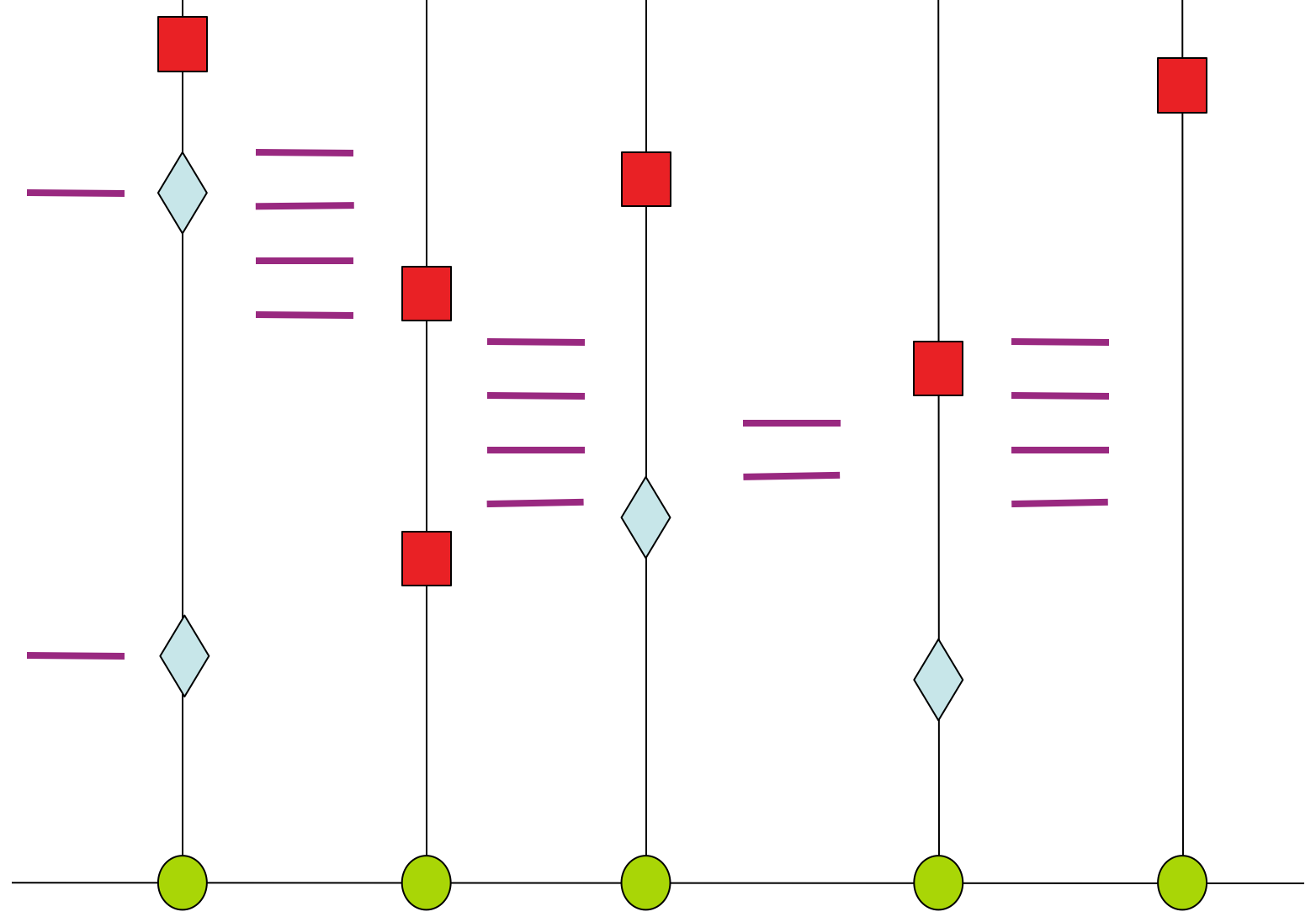}}
\caption{\label{topology} Critical points: red. Regular points in the critical lines: blue. Branches: violet. Critical points projection: green.}
\end{figure}

First step starts with the computation of the discriminant $R(x)$ of $P$ with respect to
$y$ by any available method (a determinant computation, subresultants, etc.) and typically of its squarefree part
$$D(x)=\frac{R(x)}{\gcd(R(x),R'(x))}.$$
This step ends with the computation of the real roots of $D(x)$.

In the second step, in order to avoid the numerical problems arising from the computation of the multiple roots of every $P(\alpha_i,y)$, a linear change of coordinates might simplify this and further computations. Such change of coordinates puts the curve in a desirable position which is known as the ``general position" (see \cite{LGV_MK, LGV_IN}).

\begin{definition}Let $P\in\R[x,y]$ be a squarefree polynomial. The real algebraic plane curve defined by $P$, ${\cal C}_P$, is in general position if the following  two conditions are satisfied:
\begin{enumerate}
\item The leading coefficient of $P$ with respect to $y$ (which is a polynomial in $\R[x]$) has no real roots.
\item For every $\alpha\in\R$ the number of critical points in the vertical line $x=\alpha$ is $0$ or $1$.
\end{enumerate}
\end{definition}

Two are the main advantages of having the curve in general position (see \cite{LGV_MK, LGV_IN}):
\begin{enumerate}
\item It is possible to compute rational functions $r_{\alpha_i}(x)\in\R(x)$ such that for each critical point $(\alpha_i,\beta_i)$ we have $\beta_i=r_{\alpha_i}(\alpha_i)$. This allows also to symbolically construct a squarefree polynomial $g_i(\alpha_i,y)$, from every $P(\alpha_i,y)$ (i.e., by symbolically dividing $P(\alpha_i,y)$ by a convenient power of $y-r_{\alpha_i}(\alpha)_i$) whose real roots need to be computed too.
\item The edges of the topological graph representing  ${\cal C}_P$  around each critical point $(\alpha_i,\beta_i)$ can be obtained using a straightforward combinatorial reasoning.
\end{enumerate}

If the curve is not in general position we can apply the linear change of coordinates and restart the process with the new polynomial. After a finite number of such transformations the general position of the curve is guaranteed. Notice that these changes of coordinates do not modify the topology of the curve and they can be undone at the end.

Third step is accomplished by computing the number of real roots of the squarefree
polynomials $P(\gamma_i,y)$ ($i\in\{1,2,\ldots,r+1\}$) with
$\gamma_1=-\infty$, $\gamma_{r+1}=\infty,$ and for $2\le i \le r$, $\gamma_i$ being any
real number in the open interval $(\alpha_{i-1},\alpha_{i})$. When ${\cal C}_P$ is in general position this implies that, to the right of $x=\alpha_{i-1}$, all halfbranches start from $(\alpha_i,\beta_i)$ except one halfbranch starting from each regular point of ${\cal C}_P$ in the vertical line $x=\alpha_{i-1}$ (same thing happens to the left of $x=\alpha_{i}$).

\section{GCD and Real Root counting through subresultants}\label{sec:subresultants}
Subresultants will be the algebraic tool to use to determine in a very easy and compact way the greatest common divisor of two univariate polynomials or the number of different real roots of an univariate polynomial when they involve parameters or algebraic numbers as coefficients.

\begin{definition}\label{subres}
Let  $$P(T)=\sum_{i=0}^p a_{i}T^i\qquad \hbox{and} \qquad Q(T)=\sum_{i=0}^q b_{i}T^i$$ be two polynomials with coefficients in a field with $p\geq q$ and $j\in\{0,1,\ldots,q-1\}$. Denoting 
\[\delta_k=(-1)^{\frac{k(k+1)}{2}}\]   for every integer $k$, we define the $j$--th subresultant polynomial of $P$ and $Q$ with respect to $T$ in the following way (as in \cite{Li2006a}):
\[\mbox{\bf Sres}_j(P,Q)=(-1)^j\delta_{p-j-1}
\left\vert
\begin{array}{cccccccccc}
a_p & a_{p-1}   &  a_{p-2}  &\ldots  &\ldots&a_0    &&&\\
       &\ddots&\ddots &\ddots&         &&\ddots&&\\
       &          &  a_p  & a_{p-1} &a_{p-2} &\ldots  &\ldots&a_0    &\\
 b_q& b_{q-1}   &  b_{q-2}  &\ldots  &\ldots&\ldots    &b_0&&\\
       &\ddots&\ddots &\ddots&         &&&\ddots&\\
       &          &  b_q  & b_{q-1} &b_{q-2} &\ldots  &\ldots&\ldots&b_0\\
             &          &           &       &       &    1     &-T&&\\
       &          &           &       &       &          &\ddots&\ddots&\\
       &          &           &       &       &&&    1     &-T
\end{array}
\right\vert
\!\!\!\!\!\!\!\
\begin{array}{l}
\left.\begin{array}{c}\\ \\ \\  \\  \end{array}\right\} q-j
\\
\left.\begin{array}{c}\\ \\ \\  \end{array}\right\} p-j
\\
\left.\begin{array}{c}\\ \\ \\  \\  \end{array}\right\} j
\end{array}
\]
and we define the $j$--th subresultant coefficient  of $P$ and $Q$ with respect to $T$,
$\mathrm{\bf sres}_j(P,Q)$, as the coefficient of $T^j$ in $\mathrm{\bf Sres}_j(P,Q)$.
The resultant of $P$ and $Q$ with respect to $T$ is:
$$\mathrm{\bf Resultant}(P,Q)= \mathrm{\bf Sres}_0(P,Q)=\mathrm{\bf sres}_0(P,Q)\enspace.$$ 
\end{definition}

There are many different ways of defining and computing subresultants: see \cite{BPR,Li2006a} and \cite{Gonzalez-Vega2009} for a short introduction and for a pointer to several references. The use of only one sequence of subresultants for dealing with the gcd and the real root counting problems motivates the ``unusual" introduction of the sign $(-1)^j\delta_{p-j-1}$ in the previous definition. In this way we avoid to use subresultants for gcd computations and signed subresultants or the Sturm--Habicht sequence for solving the real root counting problem.

Subresultants allow to characterize easily the degree of the greatest common divisor of two univariate polynomials whose coefficients depend on one or several parameters. Since the resultant of $P$ and $Q$ is equal to
the polynomial ${\bf sres}_{0}(P,Q)$: 
\begin{equation}\label{resultant} 
\hbox{${\bf sres}_0(P,Q)=0$ if and only if there exists $T_0$ such that $P(T_0)=0$ and  $Q(T_0)=0$.}
\end{equation}
More generally, the determinants ${\bf sres}_{j}(P,Q)$, which are the formal leading coefficients of the subresultant sequence for $P$ and $Q$, can be used to compute the greatest common divisor of $P$ and $Q$ thanks to the following equivalence:
\begin{equation}\label{gcd} {\bf Sres}_i(P,Q)=\gcd(P,Q)\quad\Longleftrightarrow\quad
\begin{cases}{\bf sres}_0(P,Q)=\ldots={\bf sres}_{i-1}(P,Q)=0& \cr
            \hfill{\bf sres}_i(P,Q)\neq 0\hfill&
\cr\end{cases}\end{equation}

\begin{equation}\label{gcd} {\bf Sres}_i(P,Q)=\gcd(P,Q)\quad\Longleftrightarrow\quad
\begin{cases}s_0(P,Q)=\ldots=s_{i-1}(P,Q)=0& \cr
            \hfill s_i(P,Q)\neq 0\hfill&
\cr\end{cases}\end{equation}

The use of subresultants for solving the real root counting problem was introduced in \cite{GLRR1} following the seminal works of W. Habicht (see \cite{Habicht}). Proofs of the results described here can be found in \cite{BPR,GLRR1,GLRR2}. Next definition introduces the subresultant sequence associated to $P$ as the subresultant sequence for $P$ and $P'$, the main tool we will use to count the number of different real roots of a univariate polynomial.

\begin{definition}\label{defSH}\hskip 0pt\\
Let $P$ be a polynomial in $\R[T]$ with $p=\deg(P)$. We define the subresultant sequence of $P$ as
${\bf Sres}_{p}(P)=P$, ${\bf Sres}_{p-1}(P)=P'$ and for every
$j\in\{0,\ldots,p-2\}$:
\[{\bf Sres}_{j}(P)={\bf Sres}_{j}(P,P').\]
For every $j$ in $\{0,\ldots,p\}$ the principal $j$--th subresultant coefficient of $P$ is defined as:
\[{\bf sres}_j(P)={\rm coef}_j({\bf Sres}_{j}(P))\]
\end{definition}

It is important to quote here that the discriminant of $P$ is equal to
the polynomial ${\bf sres}_{0}(P)$ modulo the leading coefficient, $a_p$,
of $P$: $${\bf sres}_0(P)=a_p{\rm discriminant}(P).$$ 

Sign counting on the principal subresultant coefficients
provides the number of different real roots of the
considered polynomial. Next definitions show which are the sign
counting functions to be used in the sequel (see \cite{GLRR1,GLRR2}).
\begin{definition}\hskip 0pt\\ Let $\I=\{a_0,a_1,\ldots ,a_n\}$ be a
list of non zero elements in $\R$.
\begin{itemize}
\item ${\bf V}(\I)$ is defined as the number of sign variations in
the list $\{a_0,a_1,\ldots ,a_n\}$, \smallskip \item ${\bf P}(\I)$
is defined as the number of sign permanences in the list
$\{a_0,a_1,\ldots ,a_n\}$.
\end{itemize}
\end{definition}

\begin{definition}\label{cuenta}\hskip 0pt\\ Let $a_{0},a_{1}, \ldots
, a_{n}$ be elements in $\R$ with $a_{0}\neq0$ and  with the
following distribution of zeros:
\[\I=\{a_{0},a_{1}, \ldots , a_{n}\} = \]
\[\begin{matrix}  =\{ a_{0}, \ldots , a_{i_{1}}, \overbrace{0,
\ldots , 0}^{k_{1}},  a_{i_{1}+k_{1}+1},\ldots , a_{i_{2}},
\overbrace{0, \ldots , 0}^{k_{2}},  a_{i_{2}+k_{2}+1}, ,
a_{i_{3}}, 0,.\ldots ,0, a_{i_{t-1}+k_{t-1}+1}, \ldots
, a_{i_{t}},  \overbrace{0,\ldots , 0}^{k_{t}} \} \cr\end{matrix}
\]  where all the $a_{i}$'s that have been written are not 0.
Defining  $i_{0}+k_{0}+1=0$ and:
\[{\bf C}( \I ) = \sum_{s=1}^{t} \bigl({\bf
P}(\{a_{i_{s-1}+k_{s-1}+1},\ldots ,a_{i_{s}}\})-{\bf
V}(\{a_{i_{s-1}+k_{s-1}+1},\ldots ,a_{i_{s}}\})\bigr)+
\sum_{s=1}^{t-1} \varepsilon_{i_{s}}\]  where:
\[\varepsilon_{i_{s}}=\begin{cases}
\quad\quad\quad\quad\quad 0&  \hbox{if $k_{s}$  is odd} \cr
                              \displaystyle(-1)^{\frac{k_s}{2}}{\rm
sign}\left(\frac{a_{i_s+k_s+1}}{a_{i_s}}\right)& \hbox{if   $k_{s}$  is
even}\cr\end{cases} \]
\end{definition}

Next the relation between the number of real zeros of a polynomial
$P\in\R[x]$ and the polynomials in the subresultant sequence of
$P$ is presented. Its proof can be found in \cite{GLRR1,GLRR2}.

\begin{theorem}\label{SH2}\hskip 0pt\\ If $P$ is a polynomial in $\R[T]$ with
$p=\deg(P)$ then:
\[{\bf C}(\{{\bf sres}_p(P),\ldots ,{\bf sres}_0(P)\})=\#\{\alpha\in\R\colon P(\alpha)=0\}.\]
\end{theorem}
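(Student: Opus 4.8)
The plan is to reduce the statement to classical Sturm theory via the Cauchy index, and then to absorb the defective (degree-gap) configurations of the subresultant chain into the correction terms $\varepsilon_{i_s}$ of the counting function $\mathbf{C}$.

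First I would recall that the number of distinct real roots of $P$ equals the Cauchy index of the logarithmic derivative $P'/P$ on the whole real line. Factoring $P=a_p\prod_\alpha (T-\alpha)^{m_\alpha}$ over its distinct (real and complex) roots gives $P'/P=\sum_\alpha m_\alpha/(T-\alpha)$, so near each real root $\alpha$ the function behaves like $m_\alpha/(T-\alpha)$ with $m_\alpha>0$ and thus jumps from $-\infty$ to $+\infty$ as $T$ increases through $\alpha$; this contributes exactly $+1$ to the Cauchy index, while complex roots contribute nothing on $\R$. Hence $\mathrm{Ind}_{-\infty}^{+\infty}(P'/P)=\#\{\alpha\in\R:P(\alpha)=0\}$, and it suffices to compute this index from the subresultant sequence of $P$ and $P'=\mathbf{Sres}_{p-1}(P)$ introduced in Definition \ref{defSH}.

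Second, I would use that the signed remainder (Sturm) sequence of the pair $P,P'$ is term by term a positive or explicitly-signed multiple of the subresultant polynomials $\mathbf{Sres}_j(P)$, so that the generalized Sturm theorem expresses the index as a difference $V_{-\infty}-V_{+\infty}$ of sign variations of this sequence evaluated at the two ends. For $T\to+\infty$ the sign of $\mathbf{Sres}_j(P)$ is the sign of its leading coefficient, which in the regular case is $\mathbf{sres}_j(P)$; for $T\to-\infty$ each such leading term is multiplied by $(-1)^j$. A direct check on a pair of consecutive nonzero coefficients then shows that a variation at $-\infty$ corresponds to a permanence at $+\infty$ and conversely, so that on each maximal run of nonzero principal subresultant coefficients the contribution to $V_{-\infty}-V_{+\infty}$ is precisely $\mathbf{P}-\mathbf{V}$ computed on that run; this recovers the first double sum in the definition of $\mathbf{C}$.

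Third, and this is where the main difficulty lies, I would account for the blocks of $k_s$ consecutive zeros in the list $\{\mathbf{sres}_p(P),\ldots,\mathbf{sres}_0(P)\}$. Here one invokes the structure (Habicht) theorem for the subresultant chain: across such a gap the sequence is filled by a single subresultant proportional up to sign to a power of a boundary coefficient, so the degrees jump by $k_s+1$ and the signs of the suppressed polynomials at $\pm\infty$ are determined by $a_{i_s}$ and $a_{i_s+k_s+1}$. Propagating the parity factors $(-1)^{\deg}$ through the block contributes nothing when $k_s$ is odd and contributes exactly $(-1)^{k_s/2}\,\mathrm{sign}(a_{i_s+k_s+1}/a_{i_s})$ when $k_s$ is even, which is precisely $\varepsilon_{i_s}$. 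The even case is the delicate point, since the sign of the ratio genuinely enters the count and reflects whether the interpolated subresultant has a definite or an alternating sign at the left end; pinning this down requires the fine gap structure of the chain rather than the generic Euclidean picture. Summing the local $\mathbf{P}-\mathbf{V}$ contributions over the runs together with the $\varepsilon_{i_s}$ over the gaps gives $\mathbf{C}(\{\mathbf{sres}_p(P),\ldots,\mathbf{sres}_0(P)\})=V_{-\infty}-V_{+\infty}=\mathrm{Ind}_{-\infty}^{+\infty}(P'/P)$, which by the first step equals the number of distinct real roots of $P$.
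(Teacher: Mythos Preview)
The paper does not give its own proof of this theorem; immediately before the statement it says ``Its proof can be found in \cite{GLRR1,GLRR2}'' and then simply states the result. So there is no in-paper argument to compare against.

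Your outline is the standard one carried out in those references (and in \cite{BPR}): identify the number of distinct real roots with the Cauchy index of $P'/P$, compute that index as $V_{-\infty}-V_{+\infty}$ along a Sturm-like chain realized (up to positive scalars and the built-in signs $(-1)^j\delta_{p-j-1}$) by the subresultants, observe that on a run of nonzero principal coefficients this difference collapses to $\mathbf{P}-\mathbf{V}$, and then invoke Habicht's gap structure to produce the $\varepsilon_{i_s}$ across blocks of zeros. That is exactly the architecture of the cited proofs, so in spirit you are aligned with what the paper defers to.

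The one place your write-up would need tightening before it counts as a proof is the third step. ``Propagating the parity factors $(-1)^{\deg}$ through the block'' is doing real work: when $\mathbf{sres}_{d-1}=\cdots=\mathbf{sres}_{e+1}=0$ with $\mathbf{sres}_d,\mathbf{sres}_e\neq 0$, the structure theorem gives that $\mathbf{Sres}_{d-1}$ is defective of exact degree $e$, that $\mathbf{Sres}_{d-2},\ldots,\mathbf{Sres}_{e+1}$ vanish identically, and relates the leading coefficient of $\mathbf{Sres}_{d-1}$ to $\mathbf{sres}_e$ by an explicit sign depending on $d-e$. The contribution to $V_{-\infty}-V_{+\infty}$ across this gap then has to be computed from those two nonzero polynomials (of degrees $d$ and $e$) rather than from a run of consecutive degrees, and it is this computation---not a generic parity argument---that yields $0$ when $k_s=d-e-1$ is odd and $(-1)^{k_s/2}\,\mathrm{sign}(\mathbf{sres}_e/\mathbf{sres}_d)$ when $k_s$ is even. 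Your summary asserts the correct outcome but does not display this step; filling it in (as in \cite{GLRR1,GLRR2} or \cite{BPR}) would complete the argument.
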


The number of different real roots of $P$ is determined exactly by the signs of the last $p-1$ determinants ${\bf sres}_i(P)$ (being the first two ones  ${\rm lcof}(P)$ and $p\;{\rm lcof}(P)$ with ${\rm lcof}(P)$ denoting the leading coefficient of $P$). If all ${\bf sres}_j(P)$ are different from $0$ then the number of different real roots of $P$ agrees with the difference between the number of sign agreements and the number of sign changes in the list of principal subresultant coefficients of $P$. It is easy to recognise here that, in this case, this is the same than the difference between the number of sign changes in 
$\{{\bf Sres}_j(P)(-\infty)\}_{j=0,1,\ldots,p}$ and the number of sign changes in $\{{\bf Sres}_j(P)(+\infty)\}_{j=0,1,\ldots,p}$ (in the same way than when using Sturm sequences).

The definition of the polynomials in the subresultant sequence of $P$ through determinants allows to perform computations dealing with real roots in a generic way: if $P$ is a polynomial with parameters or algebraic numbers as coefficients whose degree does not change after specialisation then the subresultant sequence for $P$ can be computed without specialising the parameters and the result is always good after specialisation (modulo the condition over the degree of $P$). This is not true  when using Sturm sequences (the computation of the euclidean remainders makes to appear denominators which can vanish after specialisation) or negative polynomial remainder sequences (even fixing the degree of $P$, the sequence has not always the same number of elements:  see \cite{GLRR1,Loos} for a more detailed explanation).

\begin{notation}\label{notationsubres}\hskip 0pt\\ {\rm If $P$ is a polynomial in $\R[T]$  with $p=\deg(P)$ and $0\leq k\leq p-2$ then the coefficients of the subresultant of $P$ of index $k$ will be denoted in the following way: 
$${\bf Sres}_k(P)\eqdef s_k(P) T^k+s_{k,k-1}(P) T^{k-1}+\ldots+s_{k,1}(P) T+s_{k,0}(P)\ .$$
This definition is extended to indexes $p$ and $p-1$ by introducing:
$${\bf Sres}_p(P)\eqdef P\qquad \qquad {\bf Sres}_{p-1}(P)\eqdef P'$$
When the context shows clearly who $P$ is, we will write $s_k$ and $s_{k,j}$ instead of $s_k(P)$  and $s_{k,j}(P)$.}
\end{notation}

The results presented in this section will be mainly applied to a polynomial $P(\alpha,y)$ where $\alpha$ is a real algebraic number.

\section{Multiple real roots of univariate polynomials through subresultants}\label{sec:multipleroots}
We introduce here formulas describing the real multiple roots of an univariate polynomial in terms of their coefficients. This will be always possible for degrees 2, 3, 4 and 5 and in most cases for degrees 6 and 7. It will be also characterised when this will be possible in the general case. Recently, in \cite{Hong2020}, the multiplicity structure of an univariate polynomial has been characterised in terms of its coefficients.

The main aim of deriving these formulae is to use them to describe de $y$-coordinates of the points on a critical line of a real algebraic plane curve defined implicitely. 

Cases 2 and 3 are not considered here since they are very easy to analyse.

\subsection{$\deg(P)=4$}\label{subsec:multipleroots_4}
If $a_4\neq 0$ then the polynomial in $\R[x]$
$$P(T)=a_4T^4+a_3T^3+a_2T^2+a_1T+a_0$$
factors, when there are multiple roots, only in the following five ways:
\begin{enumerate}
\item $P(T)=a_4(T-\beta)^4$ with $\beta\in\R$.
\item $P(T)=a_4(T-\beta)^3(T-\gamma)$ with $\beta,\gamma\in\R$ and $\beta\neq\gamma$.
\item $P(T)=a_4(T-\beta)^2(T-\gamma)^2$ with $\beta,\gamma\in\R$ and $\beta\neq\gamma$.
\item $P(T)=a_4(T-\gamma)^2(T-\overline{\gamma})^2$ with $\gamma\in \C-\R$.
\item $P(T)=a_4(T-\beta)^2(T-\gamma_1)(T-\gamma_2)$ with $\gamma_1\neq \gamma_2$ (if $\gamma_1\in \C-\R$ then $\gamma_2=\overline{\gamma_1}\in \C-\R$).
\end{enumerate}
Polynomials $s_i=s_i(P)$ and $s_{i,j}=s_{i,j}(P)$ will characterise each possibility in the following way (according to (\ref{gcd})):
\begin{enumerate}
\item If $s_0=s_1=s_2=0$ then
$$\gcd\left(P,P'\right)=P'={\bf Sres}_{3}(P),\quad P(T)=a_4(y-\beta)^4\quad\hbox{and}\quad\beta=-\frac{s_{3,2}}{3s_3}=-\frac{a_3}{4a_4}.$$
\item If $s_0=s_1=0$, $s_2\neq 0$ and $s_{2,1}^2-4s_2s_{2,0}=0$ then
$$\gcd\left(P,P'\right)={\bf Sres}_{2}(P), \quad
P(T)=a_4(T-\beta)^3(T-\gamma),$$
with $\beta,\gamma\in\R$, $\beta\neq\gamma$ and
$$\beta=-\frac{s_{2,1}}{2s_2},\quad 
T-\gamma=\frac{P(T)}{a_4(T-\beta)^3}\quad\hbox{and}\quad \gamma=\frac{a_0}{a_4\beta^3}$$
when $\beta\neq 0$. If $\beta=0$ then $\gamma=a_3/a_4$ with $a_3\neq 0$.
\item If $s_0=s_1=0$, $s_2\neq 0$ and $s_{2,1}^2-4s_2s_{2,0}>0$ then
$$\gcd\left(P,P'\right)={\bf Sres}_{2}(P), \quad P(T)=a_4(T-\beta_1)^2(T-\beta_2)^2$$
with $\beta_1,\beta_2\in\R$ and
$$\beta_1,\beta_2=\frac{-s_{2,1}\pm\sqrt{s_{2,1}^2-4s_2s_{2,0}}}{2s_2}.$$
\item If $s_0=s_1=0$, $s_2\neq 0$ and $s_{2,1}^2-4s_2s_{2,0}<0$ then
$$\gcd\left(P,P'\right)={\bf Sres}_{2}(P), \quad
P(T)=a_4(T-\gamma)^2(T-\overline{\gamma})^2$$
with $\gamma\in\C-\R$.
\item If $s_0=0$ and $s_1\neq 0$ then
$$\gcd\left(P,P'\right)={\bf Sres}_{1}(P), \quad P(T)=a_4(T-\beta)^2(T-\gamma_1)(T-\gamma_2),$$
with $\gamma_1\neq \gamma_2$ (if $\gamma_1\in\C-\R$ then $\gamma_2=\overline{\gamma}\in \C-\R$) and 
$$\beta=-\frac{s_{1,0}}{s_1}\quad\hbox{and}\quad (T-\gamma_1)(T-\gamma_2)=
\frac{P(T)}{a_4(T-\beta)^2}.$$
\end{enumerate}
In all cases we have characterized the real roots, simple or multiple, of any degree $4$ polynomials with multiple roots as explicit functions of the coefficients of $P$, $a_0$, $a_1$, $a_2$, $a_3$ and $a_4$. These functions are either rational functions in the $a_i$'s or the square root of a polynomial in the $a_i$'s known to be strictly positive.

\subsection{$\deg(P)=5$}
If $a_5\neq 0$ then the polynomial in $\R[x]$
$$P(T)=a_5T^5+a_4T^4+a_3T^3+a_2T^2+a_1T+a_0$$
factors, when there are multiple roots, only in the following five ways:
\begin{enumerate}
\item $P(T)=a_5(T-\beta)^5$ with $\beta\in\R$.
\item $P(T)=a_5(T-\beta)^4(T-\gamma)$ with $\beta,\gamma\in\R$.
\item $P(T)=a_5(T-\beta)^3(T-\gamma)^2$ with $\beta,\gamma\in\R$.
\item $P(T)=a_5(T-\beta)^3(T-\gamma_1)(T-\gamma_2)$ with $\beta,\gamma_1,\gamma_2\in\R$ and $\gamma_1\neq \gamma_2$.
\item $P(T)=a_5(T-\beta)^3(T-\gamma)(T-\overline{\gamma})$ with with $\beta\in\R$ and $\gamma\in \C-\R$.
\item $P(T)=a_5(T-\beta)^2(T-\gamma_1)^2(T-\gamma_2)$ with $\gamma_1\neq \gamma_2$ and $\beta,\gamma_i\in\R$.
\item $P(T)=a_5(T-\beta)^2(T-\gamma_1)(T-\gamma_2)(T-\gamma_3)$ with $\gamma_1\neq \gamma_2\neq \gamma_3$ and $\beta,\gamma_i\in\R$.
\item $P(T)=a_5(T-\beta)^2(T-\gamma_1)(T-\gamma_2)(T-\overline{\gamma_2})$ with $\beta,\gamma_1\in\R$ and $\gamma_2\in \C-\R$.
\item $P(T)=a_5(T-\beta)(T-\gamma)^2(T-\overline{\gamma})^2$ with $\beta\in\R$ and $\gamma\in \C-\R$.
\end{enumerate}

We define $\tau_0(T)=P(T)$ and, for $k\geq 1$: 
$$\tau_k(T)=\gcd(\tau_{k-1},\tau_{k-1}').$$
Polynomials $s_i(P)$ will characterize each possibility for the greatest common divisor of $P$ and $P'$, $\tau_1(P)$, in the following way (according to (\ref{gcd})):
\begin{enumerate}
\item If $s_0(P)=s_1(P)=s_2(P)=0$ and $s_3(P)=0$ then $\tau_1(T)=P'={\bf Sres}_{4}(P)$,
$$P(T)=a_5(T-\beta)^5\quad\hbox{and}\quad\beta=-\frac{s_{4,3}(P)}{4s_4(P)}=-\frac{a_4}{5a_5}.$$
\item If $s_0(P)=s_1(P)=s_2(P)=0$ and $s_3(P)\neq 0$ then $\tau_1(T)={\bf Sres}_{3}(P)$. The only possible cases are:
\begin{enumerate}
\item $P(T)=a_5(T-\beta)^4(T-\gamma)$ with $\beta,\gamma\in\R$ and $\beta\neq\gamma$.
\item $P(T)=a_4(T-\beta)^3(T-\gamma)^2$ with $\beta,\gamma\in\R$ and $\beta\neq\gamma$.
\end{enumerate}
\item If $s_0(P)=s_1(P)=0$ and $s_2(P)\neq 0$ then $\tau_1(T)={\bf Sres}_{2}(P)$. The only possible cases are:
\begin{enumerate}
\item $P(T)=a_5(T-\beta)^3(T-\gamma_1)(T-\gamma_2)$ with $\beta,\gamma_1,\gamma_2\in\R$ and $\beta\neq\gamma_1\neq\gamma_2$.
\item $P(T)=a_5(T-\beta)^3(T-\gamma)(T-\overline{\gamma})$ with $\beta\in\R$ and $\gamma\in\C-\R$.
\item $P(T)=a_5(T-\beta)^2(T-\gamma_1)^2(T-\gamma_2)$ with $\beta,\gamma_1,\gamma_2\in\R$ and $\beta\neq\gamma_1\neq\gamma_2$.
\item $P(T)=a_5(T-\beta)(T-\gamma)^2(T-\overline{\gamma})^2$ with $\beta\in\R$ and $\gamma\in\C-\R$.
\end{enumerate}
\item If $s_0(P)=0$, $s_1(P)\neq 0$  then $\tau_1(T)={\bf Sres}_{1}(P)$. The only possible cases are:
\begin{enumerate}
\item $P(T)=a_5(T-\beta)^2(T-\gamma_1)(T-\gamma_2)(T-\gamma_3)$ with $\beta,\gamma_1,\gamma_2,\gamma_3\in\R$ and $\beta\neq\gamma_1\neq\gamma_2\neq\gamma_3$.
\item $P(T)=a_5(T-\beta)^2(T-\gamma_1)(T-\gamma_2)(T-\overline{\gamma_2})$ with $\beta,\gamma_1\in\R$ and $\gamma_2\in\C-\R$.
\end{enumerate}
\end{enumerate}

In order to separate cases 2(a) and 2(b), we start noting that $\tau_1(T)={\bf Sres}_{3}(\tau_0)$. In case 2(a) we have
$$\tau_1(T)=s_3(\tau_0)(T-\beta)^3$$
and in case 2(b) we have
$$\tau_1(T)=s_3(\tau_0)(T-\beta)^2(T-\gamma).$$
Subresultants $s_0(\tau_1)$ and $s_1(\tau_1)$ separate these two cases:
\begin{itemize}
\item if $s_0(\tau_1)=s_1(\tau_1)=0$ then $\tau_1(T)=s_3(\tau_0)(T-\beta)^3$ and $P(T)=a_5(T-\beta)^4(T-\gamma)$.
\item if $s_0(\tau_1)=0$ and $s_1(\tau_1)\neq 0$ then $\tau_1(T)=s_3(\tau_0)(T-\beta)^2(T-\gamma)$ and $P(T)=a_5(T-\beta)^3(T-\gamma)^2$.
\end{itemize}
In case 2(a) we have 
$$\beta=-\frac{s_{3,2}(\tau_0)}{3s_3(\tau_0)}=-\frac{s_{3,2}(P)}{3s_3(P)},\quad 
T-\gamma=\frac{\tau_0(T)}{a_5(T-\beta)^4}=\frac{P(T)}{a_5(T-\beta)^4}\quad\hbox{and}\quad
\gamma=-\frac{a_0}{a_5\beta^4}.$$
And, in case 2(b), we have 
$$\tau_2(T)={\bf Sres}_{1}(\tau_1)=s_1(\tau_1)(T-\beta)$$
and
$$\beta=-\frac{s_{1,1}(\tau_1)}{s_1(\tau_1)},\quad 
T-\gamma=\frac{\tau_1(T)}{s_3(\tau_0)(T-\beta)^2}\quad\hbox{and}\quad
\gamma=-\frac{s_{3,0}(P)}{s_3(P)\beta^2}.$$

Cases 3(a), 3(b), 3(c) and 3(d) are separated by the degree of $\tau_2(T)$ and the signs of $a_5$, $s_3(P)$ and $s_2(P)$ in the following way: 
\newpage

\begin{itemize}
\item Case 3(a): $\deg(\tau_2(T))=1$ and ${\bf C}(\{a_5,5a_5,s_3(P),s_2(P),0,0\})=3$.
\item Case 3(b): $\deg(\tau_2(T))=1$ and ${\bf C}(\{a_5,5a_5,s_3(P),s_2(P),0,0\})=1$.
\item Case 3(c): $\deg(\tau_2(T))=0$ and ${\bf C}(\{a_5,5a_5,s_3(P),s_2(P),0,0\})=3$.
\item Case 3(d): $\deg(\tau_2(T))=0$ and ${\bf C}(\{a_5,5a_5,s_3(P),s_2(P),0,0\})=1$.
\end{itemize}
Cases 3(a) and 3(c) requiere $a_5$, $s_3(P)$ and $s_2(P)$ to have the same sign. Formulae for $\beta$ and the other real roots of $P(T)$ for these four cases follow:
\begin{itemize}
\item Case 3(a): $$\beta=-\frac{s_{2,1}(\tau_0)}{2s_1(\tau_0)}=-\frac{s_{1,1}(\tau_1)}{s_1(\tau_1)}\quad\hbox{and}\quad 
(T-\gamma_1)(T-\gamma_2)=\frac{P(T)}{a_5(T-\beta)^3}.$$
\item Case 3(b): $$\beta=-\frac{s_{2,1}(\tau_0)}{2s_1(\tau_0)}=-\frac{s_{1,1}(\tau_1)}{s_1(\tau_1)}\quad\hbox{and}\quad 
(T-\gamma)(T-\overline{\gamma})=\frac{P(T)}{a_5(T-\beta)^3}.$$
\item Case 3(c): $$(T-\beta)(T-\gamma_1)=\frac{\tau_1(T)}{s_2(P)},\quad
T-\gamma_2=\frac{s_2(P)^2P(T)}{a_5\tau_1(T)^2}
\quad\hbox{and}\quad
\gamma_2=\frac{s_2(P)^2a_0}{a_5s_{3,0}(P)^2}.
$$
\item Case 3(d): $$T-\beta=\frac{P(T)}{a_5({\bf Sres}_{2}(P))^2} \quad\hbox{and}\quad \beta=-\frac{a_0}{a_5s_{2,0}^2}.$$
\end{itemize}

In both cases 4(a) and 4(b), we have
$$\beta=-\frac{s_{1,1}(P)}{s_1(P)}$$
and they are separated by analysing the signs of $a_5$, $s_3(P)$, $s_2(P)$ and $s_1(P)$:
\begin{itemize}
\item Case 4(a): ${\bf C}(\{a_5,5a_5,s_3(P),s_2(P),s_1(P),0\})=4$.
\item Case 4(b):  ${\bf C}(\{a_5,5a_5,s_3(P),s_2(P),s_1(P),0\})=2$.
\end{itemize}
In case 4(a) we have
$$(T-\gamma_1)(T-\gamma_2)(T-\gamma_3)=\frac{P(T)}{a_5(T-\beta)^2}$$
and in case 4(b) we have
$$(T-\gamma_1)(T-\gamma_2)(T-\overline{\gamma_2})=\frac{P(T)}{a_5(T-\beta)^2}.$$
Both polynomials have no multiple roots and we know that they have exactly three and one different real roots respectively.

In all cases we have  characterized the multiple real roots of any degree $5$ polynomial with multiple roots as explicit functions of its coefficients. These functions are either rational functions in those coefficients or rational functions involving the square root of a polynomial, in those coefficients known to be strictly positive. Simple real roots of these polynomials (with multiple roots) are also characterized in the same way with the exception of cases 4(a) and 4(b) where these real roots ($0$ or $3$) come from a cubic equation (without multiple roots). 

\subsection{$\deg(P)=6$ or $\deg(P)=7$}
When the degree of $P$ is $6$ or $7$, not in all cases we can describe the multiple real roots as rational functions of the coefficients of $P$, or rational functions involving the square root of a polynomial in those coefficients known to be strictly positive, like before. The unique cases where the strategy followed for degrees $4$ and $5$ fails are the following:
\begin{itemize}
\item $\deg(P)=6$:
\begin{itemize}
\item $P(T)=a_6(T-\gamma_1)^2(T-\gamma_2)^2(T-\gamma_3)^2$ with $\gamma_1,\gamma_2,\gamma_3\in\R$ and $\gamma_1\neq\gamma_2\neq\gamma_3$.
\item $P(T)=a_6(T-\gamma_1)^2(T-\gamma_2)^2(T-\overline{\gamma_2})^2$ with $\gamma_1\in\R$ and $\gamma_2\in\C-\R$.
\end{itemize}
\item $\deg(P)=7$:
\begin{itemize}
\item $P(T)=a_7(T-\gamma_1)^2(T-\gamma_2)^2(T-\gamma_3)^2(T-\gamma_3)$ with $\gamma_1,\gamma_2,\gamma_3,\gamma_4\in\R$ and $\gamma_1\neq\gamma_2\neq\gamma_3\neq\gamma_4$.
\item $P(T)=a_7(T-\gamma_1)^2(T-\gamma_2)(T-\gamma_3)^2(T-\overline{\gamma_3})^2$ with $\gamma_1,\gamma_2\in\R$, $\gamma_1\neq\gamma_2$ and $\gamma_3\in\C-\R$.
\end{itemize}
\end{itemize}
In all remaining cases, we can proceed like before concerning multiple real roots. In all cases, we can compute a polynomial without multiple roots whose real roots are the simple real roots of the considered polynomial.

\subsection{$\deg(P)=n$}
The analysis presented for degrees 4, 5, 6 and 7 can be generalized as presented in the next theorem.

\begin{theorem} 
Let $P(T)$ be a polynomial in $\R[T]$ factorizing in the following way:
$$P(T)=\prod_{i=1}^{r}(T-\beta_i)^{m_i}\prod_{i=r+1}^{r+s}\left((T-\gamma_i)(T-\overline{\gamma_i})\right)^{m_i}
\prod_{k=1}^{t}(T-\delta_k)\prod_{k=t+1}^{t+q}(T-\phi_k)(T-\overline{\phi_k})
=\sum_{\ell=0}^{n} a_{\ell}T^{\ell}$$
with $m_i>1$ and $\beta_i,\delta_k\in\R$ and $\gamma_i,\phi_k\in\C-\R$ (all of them different). Then:
\begin{enumerate}
\item If there are no repetitions in $m_1, m_2, \ldots,m_{r+s}$, then every $\beta_i$ can be described explicitly like a rational function of the $a_{\ell}$'s.
\item If the repeated elements in $m_1, m_2, \ldots,m_{r}$ appear at most twice and every $m_i$, $1\leq i\leq r$, does not appear in  $m_{r+1}, m_{r+2}, \ldots,m_{r+s}$ then every $\beta_i$ can be described explicitly like a rational function of the $a_{\ell}$'s involving in some cases the square root of a polynomial in the $a_{\ell}$'s known to be strictly positive.
\end{enumerate}
\end{theorem}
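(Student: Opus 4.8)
The plan is to build, entirely through subresultants, the square-free tower of $P$ and then to read each real multiple root $\beta_i$ off a square-free factor of degree $1$ or $2$. I would reuse the gcd tower $\tau_0=P$, $\tau_k=\gcd(\tau_{k-1},\tau_{k-1}')$ already used for degree $5$. Listing the distinct roots of $P$ (over $\C$) as $\rho_1,\rho_2,\ldots$ with multiplicities $\mu_1,\mu_2,\ldots$, one checks immediately that $\tau_k=\prod_j(T-\rho_j)^{\max(\mu_j-k,\,0)}$, so that the quotients
$$g_k=\frac{\tau_{k-1}}{\tau_k}=\prod_{\mu_j\ge k}(T-\rho_j),\qquad h_k=\frac{g_k}{g_{k+1}}=\prod_{\mu_j=k}(T-\rho_j)$$
are square-free and $h_k$ collects exactly the roots of $P$ of multiplicity precisely $k$, each appearing simply. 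Since every simple root carries multiplicity $1$, for $k\ge 2$ the factor $h_k$ is built only from the multiple roots (real $\beta_i$ or complex $\gamma_i$) whose multiplicity equals $k$.

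First I would establish that every $\tau_k$, hence every $g_k$ and $h_k$, has coefficients that are rational functions of $a_0,\ldots,a_n$; this is precisely the feature subresultants are designed to provide. By the characterization (\ref{gcd}), $\tau_1={\bf Sres}_{d_1}(P)$ where $d_1=\deg\tau_1$ is fixed by the multiplicity data, and the coefficients of ${\bf Sres}_{d_1}(P)$ are, by the determinantal Definition \ref{subres}, polynomials in $a_0,\ldots,a_n$. Treating the coefficients of $\tau_1$ as those polynomial expressions, the same characterization applied to $\tau_1$ gives $\tau_2={\bf Sres}_{d_2}(\tau_1)$ with coefficients polynomial in the coefficients of $\tau_1$, hence rational in the $a_\ell$; iterating yields the claim for all $\tau_k$. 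Because for a fixed factorization type the degree of each $\tau_k$ is determined in advance, no leading coefficient degenerates and the generic subresultant computation specialises correctly, as in the discussion following Theorem \ref{SH2}. Finally $g_k$ and $h_k$ arise by the exact polynomial divisions $\tau_{k-1}/\tau_k$ and $g_k/g_{k+1}$, whose quotient coefficients are again rational in the $a_\ell$.

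What then remains is a finite combinatorial check. Under hypothesis $1$ the multiplicities $m_1,\ldots,m_{r+s}$ are pairwise distinct, so for each real $\beta_i$ the value $k=m_i$ is attained by no other multiple root; hence $h_k$ is linear and $\beta_i$ equals minus the quotient of the constant term of $h_k$ by its leading coefficient, a rational function of the $a_\ell$. Under hypothesis $2$ a real multiplicity value $k$ occurs among $m_1,\ldots,m_r$ either once, giving again a linear $h_k$ and a rational $\beta_i$, or exactly twice, say for distinct real roots $\beta_i\neq\beta_j$; and since $k$ does not occur among the complex multiplicities $m_{r+1},\ldots,m_{r+s}$, the factor $h_k=c\bigl(T^2-(\beta_i+\beta_j)T+\beta_i\beta_j\bigr)$ is a real quadratic with two distinct real roots. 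Its discriminant equals $(\beta_i-\beta_j)^2>0$ and is a polynomial in the coefficients of $h_k$, which are rational in the $a_\ell$; the quadratic formula then presents $\beta_i,\beta_j$ as rational functions of the $a_\ell$ involving a single square root of a strictly positive polynomial, exactly as claimed.

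The main obstacle is not the final linear or quadratic extraction but the bookkeeping that keeps every stage inside $\R(a_0,\ldots,a_n)$: one must argue that the index of the subresultant equal to each $\gcd$ is pinned down by the fixed multiplicity pattern and that no degree collapse spoils the induction, so that the whole tower is computed once and for all as rational expressions in the $a_\ell$. The two hypotheses are exactly what forces each relevant $h_k$ to have degree at most $2$ and, in the degree-$2$ case, two real roots: a real multiplicity shared three or more times would make some $h_k$ a cubic or higher square-free factor, and a real multiplicity coinciding with a complex one would inject a conjugate pair into $h_k$ and destroy the positivity of the radicand. Either situation breaks the rational-plus-one-square-root description, matching precisely the exceptional configurations already singled out in the degree $6$ and $7$ analysis.
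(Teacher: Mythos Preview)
Your argument is correct. Both you and the paper build the same gcd tower $\tau_k=\gcd(\tau_{k-1},\tau_{k-1}')$ through subresultants, but you diverge in how the real multiple roots are extracted from it. The paper proceeds top-down: it locates the root(s) of largest multiplicity $m$ as the root(s) of the last nonconstant $\tau_{m-1}$, divides $(\tau_{m-1})^{m}$ out of $P$, and restarts the whole process on the quotient, using the real-root counts supplied by the principal subresultant coefficients to truncate the recursion early whenever the remaining $\tau_k$ has at most one real root. You instead form the full square-free decomposition in a single pass, setting $h_k=(\tau_{k-1}/\tau_k)\big/(\tau_k/\tau_{k+1})$ and reading each $\beta_i$ from the factor $h_{m_i}$, which the hypotheses force to be linear or a real-rooted quadratic. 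Your route is precisely the ``more direct way by using the squarefree decomposition'' that the paper flags in the remark following its proof; the paper preferred the iterative peeling because it doubles as the algorithm (with its early-exit shortcuts) actually used to manufacture the formulae in the low-degree sections, whereas your presentation makes the structural role of the two hypotheses---bounding $\deg h_{m_i}$ by $2$ and keeping conjugate pairs out of it---more transparent.
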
 

\begin{proof}
Without loss of generality, we assume $m_1\geq m_2\geq \ldots\geq m_{r}$ and $m_{r+1}\geq m_{r+2}\geq \ldots\geq m_{r+s}$. We proceed like in the case $\deg(P)=5$ by defining $\tau_0(T)=P(T)$ and, for $k\geq 1$: 
$$\tau_k(T)=\gcd(\tau_{k-1},\tau_{k-1}').$$

If there are no repetitions in $m_1, m_2, \ldots,m_{r+s}$ and $m_1>m_{r+1}$, then $\deg(\tau_{m_1-1}(T))=1$ and $\beta_1$ is the unique root of $\tau_{m_1-1}(T)$. If $m_1<m_{r+1}$, then $\deg(\tau_{m_1-1}(T))=2$ and $\gamma_1$ and  $\overline{\gamma_1}$ are the roots of $\tau_{m_1-1}(T)$. Since $\tau_1(T)$ is one of the subresultants of $P$, $\tau_2(T)$ is one of the subresultants of $\tau_1(T)$, and so on,  we can conclude that the coefficients of $\tau_{m_1-1}(T)$ are polynomials in the $a_{\ell}$'s and that $\beta_1$ can be described explicitly like a rational function of the $a_{\ell}$'s. The first part follows either after removing $(\tau_{m_1-1}(T))^{m_1}$ from $P(T)$ and repeating the same process. 

When $\tau_k(T)$ is computed, we know the subresultants of $\tau_j(T)$, $0\leq j\leq k-1$ so that we can shorten this process once we know that the number of different real roots of $\tau_k(T)$ is equal to $1$ or $0$: in the first case we continue until the corresponding $\tau_k(T)$ has only one real root and no imaginary roots (this happens when $\tau_{k+1}(T)$ has no real roots and this information is provided directly by the subresultants producing $\tau_{k+1}(T)$); in the second case we eliminate $(\tau_k(T))^{k+1}$ from $P(T)$ since we are only interested in the real roots of $P(T)$.

If the repeated elements in $m_1, m_2, \ldots,m_{r}$ appear at most twice and every $m_i$, $1\leq i\leq r$, does not appear in  $m_{r+1}, m_{r+2}, \ldots,m_{r+s}$, then we proceed like in the previous case. The only difference appears when dealing with $\beta_1$ and $\beta_2$ such that $m=m_1=m_2>m_j$: in this case we stop when computing the  $2$ degree polynomial $\tau_{m-1}(T)$, whose roots are $\beta_1$ and $\beta_2$. When the biggest multiplicity appears in the ``complex" side, then we proceed in the same way until we reach $\tau_k(T)$ with no real roots:  in this case, again, we eliminate $(\tau_k(T))^{k+1}$ from $P(T)$, since we are only interested in the real roots of $P(T)$.
\end{proof}

\begin{remark} \hskip 0pt\\ {\rm 
The proof of the previous theorem can be done in a more direct way by using the squarefree decomposition of $P(T)$. The included proof provides the algorithm producing the desired description for the multiple real roots of $P(T)$ and the polynomial without multiple roots whose real roots are the simple real roots of $P(T)$.
}\end{remark}

\section{Avoiding general position: branch computations around critical points}\label{sec:branching}

In order to analyse the topology of ${\cal C}_P$ when there are only one critical point in a critical line it is very easy to determine how many half-branches there are to the left and to the right of the considered critical point. When there are more than one critical point the situation becomes more complicated but having an explicit  and easy to manipulate description of the considered critical point allows to determine the required information about the half-branches.

If $(\alpha,\beta)\in\R^2$ is a  singular point of ${\cal C}_P$ then $$P(\alpha,\beta)={\frac{\partial P}{\partial y}}(\alpha,\beta)= {\frac{\partial P}{\partial x}}(\alpha,\beta)= 0.$$

\subsection{Ramification points of ${\cal C}_P$}\label{branching_criticalnosingular}
Recall that $(\alpha,\beta)\in\R^2$ is a critical and non singular point of ${\cal C}_P$ when $P(\alpha,\beta)=P_y(\alpha,\beta)=0$ and $P_x(\alpha,\beta)\neq 0.$
Applying the Implicit Function Theorem this means that around $(\alpha,\beta)$ the curve ${\cal C}_P$ can be described as a function $x=\Phi(y)$ such that $\alpha=\Phi(\beta)$. Since 
$$\Phi'(\beta)=-\frac{P_y(\alpha,\beta)}{P_x(\alpha,\beta)}=0$$
we have three possibilities, since $y=\beta$ can be: 
\begin{itemize}
\item a local minimun of $\Phi$: $2$ half-branches to the right of $(\alpha,\beta)$, $0$ to the left; or 
\item a local maximun of $\Phi$: $0$ half-branches to the right of $(\alpha,\beta)$, $2$ to the left; or
\item an inflection point of $\Phi$: $1$ half-branch to the right of $(\alpha,\beta)$ and $1$ to the left. 
\end{itemize}
Characterising the behaviour of the function $x=\Phi(y)$ at $y=\beta$ requires to evaluate the derivatives $P_{yy}(\alpha,\beta),P_{yyy}(\alpha,\beta),P_{yyyy}(\alpha,\beta),\ldots$ until one of them does not vanishes since: if 
$\Phi^{(1)}(\beta)=\Phi^{(2)}(\beta)=\cdots=\Phi^{(k-1)}(\beta)=0, \quad \Phi^{(k)}(\beta)\neq 0$,
then (applying recursively implicit differentiation to $P(\Phi(y),y)=0$), we have that
$\Phi^{(k)}(\beta)P_x(\alpha,\beta)=-P_{y{\buildrel{k}\over\cdots}y}(\alpha,\beta)$
and we can apply the higher order derivative test to determine whether the point is a local maximum, a local minimum, or a flex. 


\subsection{Singular points of ${\cal C}_P$: $\deg_y(P)=4$}\label{branching_4}

Due to the low degree of the curve, the only ambiguity that can arise happens when there are two double roots $\beta_1<\beta_2$ of the polynomial $P(\alpha,y)$. Otherwise it is locally general position for the critical line $x=\alpha$. Since we work with a quartic, we have, at most 4 branches to each of the sides of our critical line. Since we have found two double roots for $P(\alpha,y)$, we know that the coefficient of $y^4$ must be nonzero. We'll suppose it is 1 for simplicity. Moreover, since $P$ is defined over the reals, the number of real branches to each side of the critical line must be even. 

If we have four branches to join with the singular points to one of the sides, then it must be two for each due to multiplicity. 
If there are no real branches to one of the sides
, we have no work to do for such branch.

Finally, the tricky case is when we have just two branches to join. These two branches  must go to the same point, since the other critical point must attract two conjugate complex branches. First of all, we consider $Q_i(s,t)=P(s+\alpha,t+\beta_i)$. Then the behavior of $(0,0)$ as a point for $Q_i$ is the same as the behavior of $(\alpha,\beta_i)$ for $P$. Factoring the lowest homogeneous component of $Q_i$ we have the slopes of the (at most two) tangent lines to $\mathcal{C}_P$ at $(\alpha,\beta_i)$. Then:
\begin{itemize}
\item If one of the point has all slopes to be complex non real, it is an isolated point, so the other takes the branches.
\item If one of the points has two different real slopes, it takes the two arcs, since it is a real node.
\end{itemize}

In the case that there is just one slope for the tangent lines to the curve at the critical points, we will consider the cubic curve given by $P_y(x,y)=0$. The polynomial $P_y(\alpha,y)$ vanishes in $\beta_1$, $\beta_2$ and an intermediate point $\gamma\in (\beta_1,\beta_2)$ since it is the derivative of $P(\alpha,y)$. This means that there are three real branches of $\mathcal{C}_{P_y}$ through the vertical line $x=\alpha$. Due to the low degree, the only posibility is what happens in Figure \ref{branches_deg_4} or the symmetric case, and the relative position of the branches of $\mathcal{C}_P$ and $\mathcal{C}_{P_y}$ determines how to join the half branches.

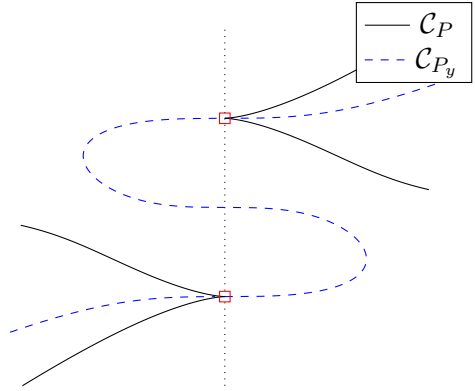
\begin{figure}[hbt]
\centering
\begin{tikzpicture}
\begin{axis}[
    axis lines = none,
    ]

\addplot[
	domain=0.2:1,
	color=black,
	samples=300
]({((x^4-2*x^2+1)/x)^(1/3)},{x});
\addlegendentry{$\mathcal{C}_P$};

\addplot[
	dashed,
	domain=1:1.4,
	color=blue,
	samples=300
]({(4*x^3-4*x)^(1/3)},{x});
\addlegendentry{$\mathcal{C}_{P_y}$};

\addplot[
	domain=0.2:1,
	color=black,
	samples=300
]({-((x^4-2*x^2+1)/x)^(1/3)},{-x});

\addplot[
	domain=1:2,
	color=black,
	samples=300
]({((x^4-2*x^2+1)/x)^(1/3)},{x});

\addplot[
	domain=1:2,
	color=black,
	samples=300
]({-((x^4-2*x^2+1)/x)^(1/3)},{-x});

\addplot[
	dashed,
	domain=-1:0,
	color=blue,
	samples=300
]({(4*x^3-4*x)^(1/3)},{x});

\addplot[
	dashed,
	domain=0:1,
	color=blue,
	samples=300
]({-(4*x-4*x^3)^(1/3)},{x});

\addplot[
	dashed,
	domain=-1.4:-1,
	color=blue,
	samples=300
]({-(4*x-4*x^3)^(1/3)},{x});

\addplot[mark=none, black, dotted] coordinates {(0,-2) (0,2)};
\addplot[mark=square, red] coordinates {(0,-1)};
\addplot[mark=square, red] coordinates {(0,1)};
\end{axis}
\end{tikzpicture}
\caption{\label{branches_deg_4} To the left, the real branches of $\mathcal{C}_P$ go to the below critical point because two branches of $\mathcal{C}_{P_y}$ are above them both. To the right, we have the complementary situation.}
\end{figure}


\subsection{Singular points of ${\cal C}_P$: $\deg_y(P)=5$}

We now consider deg$_yP=5$. The nontrivial cases here are:
\begin{itemize}
\item $P(\alpha,y)$ has two double roots.
\item $P(\alpha,y)$ has a triple root and a double root.
\end{itemize}
We will address each case separately, but first we consider, as before, that $P$ is monic on $y$ (and deg$_y(P)=5$, otherwise, we proceed as in lower degree). Then, reasoning in a similar way, we see that the number of real branches between critical lines must be 1, 3 or 5.

\subsubsection{$P(\alpha,y)$ has two double roots.}

This case can be treated as in the case of degree 4. We have two critical points that take either two or no branches each, and one single point that wil take one.

If we have 5 branches to distribute, each critical point takes two and the non-critical point takes one. We distribute the branches to avoid crossings outside the critical line.

If we have just one branch, the non-critical point takes it.

If we have three branches, one goes to the noncritical point and the other two are assigned either checking whether the tangent lines are real and different at the singularities,  or considering the curve $\mathcal{C}_{P_y}$ and reasoning as in the degree 4 case:
\begin{itemize}
\item If there are at least two branches of $\mathcal{C}_{P_y}$ above at least two of the three branches of $\mathcal{C}_P$, then the critical point below takes two branches.
\item 
Otherwise, the critical point above takes two branches.
\end{itemize}  

\subsubsection{$P(\alpha,y)$ has a triple root and a double root.}

Here, the triple critical point takes 1 or three branches, and the double one takes two or none.

If there are five branches to distribute, three go to the critical point corresponding to the triple root, and two go to the critical point corresponding to the double root.

If there is just one branch, the critical point corresponding to the triple root takes it.

If there are three branches, we again check the slopes of the tangent lines:
\begin{itemize}
\item If one of the critical points has two complex non-real slopes, the other one takes the until now unassigned branches.
\item If one of the critical points has (at least) two real slopes, it takes the until now unassigned branches.
\end{itemize}
If we do not have enough data, then we consider again $\mathcal{C}_{P_y}$. It has one real branch through the double point, one real branch passing between the critical points and two possibly non-real branches passing through the triple point.
\begin{itemize}
\item If $\mathcal{C}_{P_y}$ has just two real branches, the double point takes the until now unassigned branches.
\item If two of the four real branches of $\mathcal{C}_{P_y}$ lie above the three branches of  $\mathcal{C}_{P}$, the below critical point takes the until now unassigned branches.
\item Otherwise, the above critical point takes the until now unassigned branches.
\end{itemize}
It is impossible that the three branches of $\mathcal{C}_{P}$ lie between the four branches of $\mathcal{C}_{P_y}$ with this configuration at the critical line.

\subsection{Singular points of ${\cal C}_P$: the general case}

 We can generalise what we have got with the slopes of the tangent lines at critical points for general degree. It is well known that a tangent line to a curve at a singular point $(\alpha,\beta)$ corresponds to a branch through it. Therefore, if the lowest degree homogeneous component $h(s,t)$ of $P(s+\alpha,t+\beta)$ is square free and not a multiple of $s$, each linear factor of $h(s,t)$ in $\mathbb{R}[s,t]$ corresponds to a real branch through the point, and each quadratic factor corresponds to two conjugate non-real branches. This solves the problem for general singularities without vertical tangents.

\section{Characterising the intersection curve between two ellipsoids}\label{sec:applications}
 
Given two ellipsoids $\mathcal{A}: XAX^{T}=0$ and $\mathcal{B}: XBX^{T}=0$, $X=(x,\,y,\,z,\,1)$, their characteristic equation (or polynomial) is defined as $$f(\lambda)={\rm det}(\lambda A+B)=\det(A)\lambda^4+\ldots+\det(B)$$ which is a quartic polynomial
in $\lambda$ with real coefficients. The characterization of the relative position of two ellipsoids in terms of the sign of the real roots of their characteristic equation was introduced by \cite{WWK}.

\begin{theorem}\label{basic_ellipsoids}\hskip 0pt\\
Let ${\cal A}$ and ${\cal B}$ be two ellipsoids with the characteristic equation
$f(\lambda)$. Then:
\begin{enumerate}
\item The characteristic equation $f(\lambda)$ always has at least two negative roots.
\item ${\cal A}$ and ${\cal B}$ are separated if and only if $f(\lambda)$ has two distinct positive roots.
\item ${\cal A}$ and ${\cal B}$ touch each other externally if and only if $f(\lambda)$ has a positive double root.
\end{enumerate}
\end{theorem}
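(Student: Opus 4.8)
The plan is to exploit the fact that the real roots of $f(\lambda)=\det(\lambda A+B)$ are invariant, up to a positive scalar factor, under simultaneous congruence $A\mapsto M^{T}AM$, $B\mapsto M^{T}BM$ with $M$ invertible, since $\det(\lambda M^{T}AM+M^{T}BM)=\det(M)^{2}f(\lambda)$. This lets me normalise first: by an affine change of coordinates I would send $\mathcal{A}$ to the unit sphere, so that $A=\mathrm{diag}(1,1,1,-1)$, while $\mathcal{B}$ stays an ellipsoid (an affine image of an ellipsoid is an ellipsoid). The essential structural input is that the $4\times4$ matrix of any ellipsoid has inertia $(3,1)$, three positive and one negative eigenvalue, and in particular negative determinant; this is immediate for the diagonal model and is preserved by congruence.

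For the first assertion I would run an inertia-counting argument on the symmetric pencil $\lambda A+B$. The number of negative eigenvalues of $\lambda A+B$ is a function of $\lambda$ that can only change where $\det(\lambda A+B)=0$, since a change requires an eigenvalue to pass through $0$. At $\lambda=0$ the matrix is $B$, with exactly one negative eigenvalue; as $\lambda\to-\infty$ the term $\lambda A$ dominates and, because $\lambda<0$ reverses the inertia of $A$, the matrix acquires three negative eigenvalues. Thus this count equals $1$ at $\lambda=0$ and equals $3$ as $\lambda\to-\infty$, a net change of $2$ across $(-\infty,0)$, which forces at least two eigenvalue sign changes at negative values of $\lambda$, i.e. at least two negative real roots of $f$.

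For the second and third assertions the guiding observation is that $f(0)=\det(B)<0$ is nonzero and of fixed sign for every ellipsoid $\mathcal{B}$, so no root of $f$ can ever cross the value $\lambda=0$ as the pair $(\mathcal{A},\mathcal{B})$ is deformed continuously through ellipsoids. Together with the first assertion this pins two roots on the negative axis and confines the interesting behaviour to the remaining two roots. I would establish the baseline by the direct computation for two spheres of radii $1$ and $r$ at distance $d$, where $f(\lambda)=-(\lambda+1)^{2}\bigl(\lambda^{2}-(d^{2}-r^{2}-1)\lambda+r^{2}\bigr)$, whose quadratic factor has product of roots $r^{2}>0$ and discriminant $(d^{2}-(r+1)^{2})(d^{2}-(r-1)^{2})$. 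Separation ($d>r+1$) gives two distinct positive roots, external contact ($d=r+1$) a positive double root, and overlap a complex conjugate pair; internal contact ($d=|r-1|$) gives instead a \emph{negative} double root, which is exactly how the sign of the double root distinguishes external from internal tangency.

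Finally I would upgrade the baseline to the general statement by continuity and connectedness: the space of separated pairs of ellipsoids is path-connected and contains the far-apart sphere pairs, and along any deformation keeping the ellipsoids separated the two non-forced roots can neither cross $0$ (by the fixed sign of $\det B$) nor leave the real axis without first coalescing into a double root, which must be identified with tangency. Hence ``separated'' yields two distinct positive roots, external tangency is precisely the boundary case where they merge into a positive double root, and crossing into overlap sends them to a conjugate complex pair. I expect the main obstacle to be the rigorous identification of a root collision with genuine geometric contact: this requires analysing the degenerate pencil member $\lambda_{0}A+B$ at a positive double root $\lambda_{0}$, showing that it realises an external touch through the base locus $\mathcal{A}\cap\mathcal{B}$ of the pencil, ruling out spurious root-merging along separating deformations, and controlling the sign of the emerging double root so as to separate external from internal contact; this last sign bookkeeping is the delicate point.
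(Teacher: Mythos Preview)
The paper does not prove this theorem at all: it is quoted from \cite{WWK} (Wang--Wang--Kim) and stated without argument, then used as background for the discussion of index sequences and the eigenvalue curve. So there is no ``paper's own proof'' to compare against; any proof you write is necessarily your own contribution rather than a reconstruction of something in the text.

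On the merits of your outline: the congruence-invariance reduction and the inertia-count for part~(1) are clean and correct, and your explicit two-sphere computation is right (including the identification of the negative double root with internal tangency). The genuine gap you already flag is the only real one: in the continuity argument for (2)--(3) you must show that along a path of \emph{separated} pairs the two non-forced roots cannot coalesce. A double positive root means the rank of $\lambda_{0}A+B$ drops, and you need to tie that drop to the existence of a real point in $\mathcal{A}\cap\mathcal{B}$, which is exactly what separation forbids. That step is the core of the original Wang--Wang--Kim argument; without it your connectedness reasoning is circular, since ``roots merge $\Rightarrow$ tangency'' is precisely the nontrivial direction of~(3). If you supply that rank/base-locus analysis, your proof is complete and in fact close in spirit to the one in \cite{WWK}.
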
 

With this theorem, we can decide the basic relative positions, i.e., separation, externally touching and overlapping, of two
ellipsoids by the real root pattern of the characteristic polynomial of the quadric pencil formed by these two ellipsoids
However, the root pattern of the characteristic polynomial is not enough to characterize the arrangement of two ellipsoids.

A more in-depth algebraic characterization using the so-called index sequence was introduced in \cite{TWMW} to classify the morphology of the intersection curve of two quadratic surfaces in the the 3D real projective space. The index sequence of a quadric pencil not only includes the root pattern of the characteristic polynomial, but also involves the Jordan form associated to each root and the information between two consecutive roots. The index sequence requires to define the index function of a quadric pencil.

The behaviour of the index function for a pencil of ellipsoids is captured by the eigenvalue curve ${\cal S}$ defined by the equation $S(\lambda,\mu)=\det(\lambda A + B - \mu\I_4)=0\ .$
$S$ has degree four in both $\lambda$  and $\mu$. Because $\lambda A + B$ is a real symmetric matrix for each $\lambda\in\R$, there are in total four real roots for $S(\lambda,\mu)=0$, counting 
multiplicities. For each value $\lambda_0$, the index function $Id(\lambda_0)$ equals 
to the number of positive real roots of $S(\lambda_0,\mu)=0$.

Since $S(\lambda,\mu)=0$ is a very special quartic curve (there are always four real branches (taking into account multiplicities) its analysis is extremely simple. If $\lambda=\alpha$ is a critical line then $S(\alpha,\mu)$ factorizes in the following way:
\begin{enumerate}
\item $S(\alpha,\mu)=\tau_4(y-\beta)^4$.
\item $S(\alpha,\mu)=\tau_4(y-\beta)^3(y-\gamma)$ with $\gamma\in \R$.
\item $S(\alpha,\mu)=\tau_4(y-\beta)^2(y-\gamma)^2$ with $\gamma\in \R$.
\item $S(\alpha,\mu)=\tau_4(y-\beta)^2(y-\gamma_1)(y-\gamma_2)$ with $\gamma_1\neq \gamma_2$.
\end{enumerate}

In \ref{subsec:multipleroots_4} we can find formulae showing, in  terms of $\alpha$, the values of $\beta$, $\gamma$, $\gamma_1$ and $\gamma_2$ allowing to determine easily $Id(\alpha)$. Computing $Id(\lambda)$ for $\lambda$ not giving a critical line reduce to apply Descartes' law of signs (see Remark 2.38 in \cite{BPR}) to the polynomial $S(\lambda,\mu)$ as polynomial in $\mu$. The way the four branches touch every critical line is easily determined by using the techniques described in \ref{branching_4}.

\section{Conclusions}\label{sec:conclusions}
In this paper we have introduced a family of formulae describing the multiple roots of a univariate polynomial equation like rational functions of the coefficients of the considered polynomial. These formulae have been used to try to avoid the use of the ``general position condition" when computing the topology of a real algebraic plane curve defined implicitly. A concrete application has been also described and next step will be to design a new algorithm computing the topology of an arrangement of quartics and quintics by using the formuale and strategy introduced here.

\section{\bf Acknowledgements. }
The authors are partially supported by the grant PID2020-113192GB-I00/AEI/ 10.13039/501100011033 (Mathematical Visualization: Foundations, Algorithms and Applications) from the Spanish State Research Agency (Ministerio de Ciencia e Innovación). J. Caravantes belongs to the Research Group ASYNACS (Ref. CT-CE2019/683).

\end{document}